\title{Graph bundles and  Ricci-flatness}
\author[1]{Wenbo Li}\author[2]{Shiping Liu}
\affil[1,2]{School of Mathematical Sciences, University of Science and Technology of China, Hefei 230026, China}
\affil[1]{patlee@mail.ustc.edu.cn}
\affil[2]{spliu@ustc.edu.cn}
\date{}
\begin{document}
\maketitle

\newtheorem{theorem}{Theorem}
\newtheorem{proposition}{Proposition}
\newtheorem{corollary}{Corollary}
\newtheorem{lemma}{Lemma}
\newtheorem{definition}{Definition}
\newtheorem{remark}{Remark}
\newtheorem{eg}{Example}

\begin{abstract}
     We develop a systematic way of constructing S-Ricci flat graphs which are not Abelian Cayley via graph bundle with explicit examples. For this purpose, we prove that, with some natural constrains,  a non-trivial graph bundle cannot be isomorphic (as graphs) to the product of the base graph and fiber graph. It stands in clear contrast to the continuous case.
     \\

     \textbf{MSC 2020}: 53A70 (primary), 05C60, 05C76 (secondary).
     
\end{abstract}

\section{Introduction}
Let $F \hookrightarrow E \to B$ be a fiber bundle with fiber $F$, total space $E$ and base $B$. It is known that even if the bundle structure $F\hookrightarrow E \to B$ is non-trivial the total space $E$ can still be homeomorphic to the product space $B \times F$, even in the case that $E$ is a vector bundle. The following is an example. Consider the canonical projection $$\pi: S^{2} \times \mathbf{R} \to  S^{2} $$ and let $\xi$ be the pull-back of the tangent bundle $TS^{2}$ under $\pi$. One can regard $\xi$ as the product space $TS^{2} \times \mathbf{R}$. Now we consider an inclusion map $$i: S^{2} \hookrightarrow  S^{2} \times \mathbf{R}$$ such that $\pi \circ i=i $. Observe that the pull-back of $\xi$ under $i$ is the tangent bundle $TS^{2}$
, which is non-trivial by the Hairy Ball Theorem, thus the bundle structure $\mathbf{R}^{2} \hookrightarrow \xi \to S^{2} \times \mathbf{R}$ is also non-trivial. However, we have the canonical homeomorphism:
$$TS^{2} \times \mathbf{R} \cong S^{2} \times \mathbf{R}^{3}.$$ As a result, there exists a homeomorphism
$$\xi \cong (S^{2} \times \mathbf{R}) \times \mathbf{R}^{2}$$ between the total space and the product space of the base and fiber.

The discrete analogy of fiber bundles, which is called \emph{graph bundles} following \cite{pisanski1983edge,mohar1988maximum,kwak1990isomorphism}, is defined and studied as a generalization of both coverings and products of graphs, see, e.g.,  \cite{sohn1994characteristic,chae1993characterlstlc,imrich1997recognizing,hong1999bipartite,klavvzar1995coloring,klavzar1995chromatic,kwak1996isoperimetric,zmazek2000recognizing,zmazek2002algorithm,zmazek2002unique,zmazek2006domination,vzerovnik2000recognition,banivc2006fault,banivct2010wide,banivct2009edge,feng2006zeta,kwak2001characteristic,pisanski2009hamilton,ervevs2013mixed,kim2008generalized}. Motivated by the example above, a natural question is whether a non-trivial graph bundle could be isomorphic (as graphs) to the product of the base graph and fiber graph. 

In contrast to the continuous case, we show that with some natural constraints the answer is no (see Theorem \ref{thm:2}). This indicates a special discrete nature of graph bundles. Moreover, we introduce a new concept of \emph{discrete vector bundle} (see Definition \ref{def:2}), which is a graph bundle with a null section. 
We prove that a nontrivial discrete vector bundle with both base graph and fiber graph being vertex-transitive fails to be vertex-transitive itself (see also Theorem \ref{thm:2}).
The requirement that the graph bundle being a discrete vector bundle here is necessary (see Example \ref{eg:dvb1}).

As an application of the above results, we develop a systematic way of constructing \emph{Ricci flat} graphs via graph bundles. In particular, we explicitly construct a family of \emph{S-Ricci flat} graphs which are not Abelian Cayley (see Example \ref{eg2}).  

The concept of Ricci flat graphs has been introduced  by Chung and Yau \cite{chung1996logarithmic}, as a discrete analogue of Riemannian manifolds with vanishing Ricci curvature. During these years the study of various notions of discrete Ricci curvature on graphs attracts a lot of attentions, see, e.g., 
the survey papers \cite{BHJLW17survey,HuaLin2016survey,kamtue2018curvature,Maas17survey} and the references therein. In \cite{cushing2021curvatures}, Cushing et al. relates two important Ricci curvature notions on graphs, namely the Ollivier/Lin-Lu-Yau curvature \cite{Ollivier2009JFA,LLY2011Tohoku,MW2019Adv} and Bakry-\'Emery curvature \cite{BE1985,Elworthy1991,LinYau2010,Schmuckenschlaeger1999}, via the concept of Ricci-flatness. In particular, we mention that any Ricci-flat graph has nonnegative curvature in the sense of both Bakry-\'Emery and Ollivier/Lin-Lu-Yau.

Recall that a $d$-regular simple graph $G=(V,E)$ is called \emph{Ricci flat} if for any $x \in V$, there exist maps \[\eta_{i}:B_{1}(x)\rightarrow V, \,\, 1\leq i\leq d,\] where $B_1(x):=\{x\}\cup \{y\in V: y\sim x\}$, such that
\begin{itemize}
    \item [(i)]$\eta_{i}(u) \sim u$, $u \in B_{1}(x)$,
    \item [(ii)] $\eta_{i}(u)=\eta_{j}(u)$ iff $i=j$,
    \item [(iii)]$\bigcup_{j}\eta_{i}(\eta_{j}(x))=\bigcup_{i}\eta_{i}(\eta_{j}(x))$.
\end{itemize}
And it is called \emph{S-Ricci flat} \cite{cushing2021curvatures} or \emph{locally Abelian} if furthermore 
\begin{itemize}
    \item [(iv)] $\eta_{i}(\eta_{j}(x))=\eta_{j}(\eta_{i}(x))$.
\end{itemize}

The concept is motivated by lattices and, in general, Abelian Cayley graphs. The property of being S-Ricci flat has been employed in characterizing vertices with vanishing Ollivier curvature under certain constraints \cite[Theorem 4.5 (a)]{cushing2021curvatures}.
One may wonder how different the class of S-Ricci flat graphs and the class of Abelian Cayley graphs could be. We address this question using the tool of graph bundles. Our approach showes that S-Ricci flat graphs are not always vertex transitive. Moreover, the number of orbits in automorphism groups of S-Ricci flat graphs can be arbitrarily large (see Example \ref{eg3}).

\section{Graph bundles and discrete vector bundles}
\begin{definition}[\textbf{Graph bundle}]
Let $G=\{V_{G},E_{G}\}$ and $F=\{V_{F},E_{F}\}$ be two locally finite, simple, connected graphs with $$E^{or}_G=\{(x,y); x,y \in V_{G},x \sim y\}$$ being the set of oriented edges of $G$. Let $\phi$ be a map \[\phi:E_G^{or}\to Aut(F): (x,y)\mapsto \phi_{yx},\] to the automorphic group $Aut(F)$ of the graph $F$ such that \[\phi_{xy}=\phi_{yx}^{-1}.\] 
Then we define the graph bundle $G\times_{\phi}F=\{V,E\}$ such that $V=V_{G}\times V_{F}$, and $E$ given as follows: For any $(x,v),(y,w)\in V$, $(x,v)\sim (y,w)$ if and only if one of the following holds:
\begin{itemize}
    \item [(i)] $x=y$, $v \sim w$ in F;
    \item [(ii)] $x \sim y $ in G, $w=\phi_{yx}(v)$.
\end{itemize}
We call $G$, $F$ and $\phi$ the base graph, fiber graph and connection, respectively. The graph bundle $G\times_{\phi}F$ is also referred as an $F$-bundle over $G$. 
\end{definition}

Notice that the natural projection map $\pi$ from $V=V_G\times V_F$ to $V_{G}$ has the following property: for any edge $u \sim v$ in $E$, we have either $\pi(u)=\pi(v)$ or $\pi(u)\sim \pi(v)$ in $E_G$. 

Recall from the definition of a continuous vector bundle there is always a null section. Motivated by this fact, we propose the following definition for later use.

\begin{definition}[\textbf{Discrete vector bundle}]\label{def:2}
A graph bundle $G\times_{\phi}F$ is called a discrete vector bundle if there exists a vertex $o \in V_F$ such that every automorphism $\phi_{yx}$ with $(x,y)\in E^{or}_G $ fix $o$. The vertex $o$ is called the null element in $F$ and the graph embedding $i: G\to G\times_{\phi}F, x \mapsto (x,o)$ is called a null section.
\end{definition}

\noindent The following definition is taken from the vector bundle theory.

\begin{definition}[\textbf{Equivalence of bundles}]
Consider two graph bundles $G\times_{\phi_1} F$ and $G\times_{\phi_2} F$.
The two connections $\phi_{1}$ and $\phi_{2}$ are equivalent if there exists an graph isomorphism $\varphi$ between $G\times_{\phi_{1}}F$ and $G\times_{\phi_{2}}F$ satisfying $\pi \circ \varphi=\pi$.
\end{definition}

\begin{remark}
\rm{We point out that two inequivalent graph bundles might still be isomorphic as graphs.}
\end{remark}
\begin{remark}
\rm{
For the connection $\phi$ that assigns to each oriented edge of $G$ the identity map in $Aut(F)$, the graph bundle $G\times_{\phi}F$ is simply the Cartesian product of $G$ and $F$. Any connection equivalent to it is called \textbf{trivial}. For our purpose here,  we need to rule out the possibility of a non-trivial bundle having a Cartesian product structure. We will settle this problem in the next section, with more assumptions on the base and fiber graphs.}
\end{remark}

\noindent Here is another definition needed in this article.
\begin{definition}[\textbf{Paths}]
For any graph $G=\{V,E\}$, a path of length $N \geq 1$ in $G$ is a finite sequence $\{x_{i}\}_{i=0}^N$ with $x_{i} \in V$ and $x_{i} \sim x_{i+1}$ for $0\leq i \leq N-1$. If, in addition, $x_{0}=x_{N}$, we call it a loop. Sequences with only one element are also regarded as loops of length 0 as well. The length of a path $\gamma$ is denoted as $|\gamma|$.
\end{definition}

Notice that we allow $x_i=x_j$ for $|i-j|>1$ in a path $\{x_{i}\}_{i=0}^N$.

We are now prepared to present our first result. 

\begin{theorem} Let $G\times_\phi F$ be a graph bundle.
The connection $\phi$ is trivial if and only if for any loop $\{x_{i}\}_{i=0}^N$ in $G$,  \begin{equation}\label{eq:1}
\prod_{l=0}^{N-1}\phi_{x_{l+1},x_{l}}=id\in Aut(F),
\end{equation}
where the product is defined by multiplication on the left.
\end{theorem}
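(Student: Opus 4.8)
The plan is to establish both directions by translating the bundle-isomorphism condition into a statement about the connection $\phi$ being a "coboundary" on the graph $G$. For the forward direction, assume $\phi$ is trivial, i.e.\ there is a graph isomorphism $\varphi: G\times_\phi F \to G\times_{\mathrm{id}} F$ with $\pi\circ\varphi=\pi$. Since $\varphi$ respects the projection, for each fixed $x\in V_G$ it maps the fiber $\{x\}\times V_F$ to itself, hence restricts to a permutation $\psi_x$ of $V_F$; because adjacency of type (i) inside a fiber must be preserved, each $\psi_x$ is in fact an automorphism of $F$. Now look at an edge $x\sim y$ in $G$: a type-(ii) edge $(x,v)\sim(y,\phi_{yx}(v))$ in $G\times_\phi F$ must map to a type-(ii) edge in $G\times_{\mathrm{id}}F$, namely $(x,\psi_x(v))\sim(y,\psi_x(v))$. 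Comparing second coordinates forces $\psi_y(\phi_{yx}(v))=\psi_x(v)$ for all $v$, i.e.\ $\phi_{yx}=\psi_y^{-1}\psi_x$. Then for any loop $\{x_i\}_{i=0}^N$ with $x_0=x_N$, the product telescopes:
\[
\prod_{l=0}^{N-1}\phi_{x_{l+1},x_l}=\prod_{l=0}^{N-1}\psi_{x_{l+1}}^{-1}\psi_{x_l}=\psi_{x_N}^{-1}\psi_{x_0}=\mathrm{id},
\]
where the ordering (multiplication on the left) matches the telescoping since consecutive factors cancel.

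For the converse, suppose the loop condition \eqref{eq:1} holds. Fix a base point $x_0\in V_G$; since $G$ is connected, for every $x\in V_G$ choose a path $\gamma_x$ from $x_0$ to $x$ and define $\psi_x:=\prod \phi$ along $\gamma_x$ (with $\psi_{x_0}=\mathrm{id}$). The loop condition guarantees $\psi_x$ is independent of the chosen path: two paths from $x_0$ to $x$ concatenate (one reversed) into a loop, and using $\phi_{xy}=\phi_{yx}^{-1}$ together with \eqref{eq:1} shows the two resulting products agree. One then checks that along any edge $x\sim y$ one has $\phi_{yx}=\psi_y^{-1}\psi_x$ (append the edge to $\gamma_x$ and invoke path-independence). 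Define $\varphi: G\times_\phi F\to G\times_{\mathrm{id}}F$ by $\varphi(x,v)=(x,\psi_x(v))$. This is a bijection with $\pi\circ\varphi=\pi$, and one verifies it sends type-(i) edges to type-(i) edges (the first coordinate is unchanged and $\psi_x$ is an automorphism of $F$) and type-(ii) edges to type-(ii) edges (using $\phi_{yx}=\psi_y^{-1}\psi_x$), so $\varphi$ is the required isomorphism and $\phi$ is trivial.

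I expect the main obstacle to be the bookkeeping in the converse: making sure the non-commutativity of $\mathrm{Aut}(F)$ is handled correctly, so that "product with multiplication on the left" is consistently used both in the definition of $\psi_x$ along a path and in the loop cancellation, and that traversing an edge backwards genuinely contributes $\phi_{yx}^{-1}=\phi_{xy}$ in the right position within the word. A secondary point requiring care is the claim that the fiber-restrictions $\psi_x$ of an abstract graph isomorphism are automorphisms of $F$ (not merely bijections): this uses that $\varphi$ preserves adjacency in both directions and that the only edges within a single fiber are the type-(i) ones, which holds because $G$ is simple (no loops at a vertex of $G$) so a type-(ii) edge always changes the first coordinate. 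Once these two points are pinned down, everything else is routine verification.
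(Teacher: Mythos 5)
Your proposal is correct in substance and, for the direction ``\eqref{eq:1} implies trivial,'' it is essentially the paper's argument: both define a fiberwise gauge $\psi_x$ (the paper's $\rho_y$) by transporting along an arbitrary path from a base point, use \eqref{eq:1} for path-independence, and read off the product structure. For the direction ``trivial implies \eqref{eq:1}'' you take a genuinely different route. The paper lifts the loop $\{x_i\}$ horizontally to $G\times_\phi F$, pushes the lift through the isomorphism $\varphi$, and observes that because consecutive base vertices differ, the second coordinates of the image path must be constant, so the image --- and hence the lift --- is a loop, forcing $\prod_{l}\phi_{x_{l+1},x_l}(v)=v$. You instead extract from $\varphi$ the fiberwise bijections $\psi_x$, derive the coboundary identity $\phi_{yx}=\psi_y^{-1}\psi_x$ edge by edge, and telescope around the loop. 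Your version is more algebraic and makes the underlying cohomological content explicit ($\phi$ is trivial iff it is a coboundary), which is arguably more illuminating; the paper's version avoids introducing the fiberwise maps at the cost of a path-lifting argument. Both are valid.

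One bookkeeping item in your converse does need fixing, and it is exactly the non-commutativity issue you flagged. With $\psi_x$ defined as the left-product of the $\phi$'s along a path from $x_0$ to $x$, appending the edge $x\sim y$ and invoking path-independence gives $\psi_y=\phi_{yx}\psi_x$, i.e.\ $\phi_{yx}=\psi_y\psi_x^{-1}$, not $\psi_y^{-1}\psi_x$. Consequently the map $\varphi(x,v)=(x,\psi_x(v))$ from $G\times_\phi F$ to $G\times F$ as you wrote it does not send type-(ii) edges to edges. Either use $(x,v)\mapsto(x,\psi_x^{-1}(v))$, or (as the paper does) run the isomorphism in the opposite direction, $G\times F\to G\times_\phi F$, $(x,v)\mapsto(x,\psi_x(v))$, for which the identity needed is precisely $\psi_y=\phi_{yx}\psi_x$. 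With that adjustment the argument goes through.
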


\begin{proof}
Assume that the condition (\ref{eq:1}) holds. For any given $x\in V_G$, we define the map
\[\rho: V_G\to Aut(F): y\mapsto \rho_y \] as follows. 
For any $y\neq x$, pick any path $\gamma=\{x_{i}\}_{i=0}^N$ connecting $x$ and $y$ and define $$\rho_{y}:=\prod_{l=0}^{N-1}\phi_{x_{l+1},x_{l}}.$$
Set $\rho_{x}=id\in Aut(F)$. Due to the fact (\ref{eq:1}), the definition of $\rho_{y}$ does not rely on the choices of the path. That is, the map $\rho$ is well-defined. 

Apparently, the map $\psi: G \times F\rightarrow  G\times_{\phi}F$, $\psi(y,v)=(y,\rho_{y}(v))$ satisfies $\pi \circ \psi=\pi$.
It remains to show that $\psi$ is a graph isomorphism. It is direct to check that $\psi$ is a bijection between vertex sets.  Moreover, for $v \sim w$ in $F$, it holds  $$\psi(y,v)=(y,\rho_{y}(v))\sim (y,\rho_{y}(w))=\psi(y,w).$$  For $y_{1}\sim y_{2}$ in $G$, we have $$\psi(y_{1},v)=(y_{1},\rho_{y_{1}}(v)) \sim (y_{2},\rho_{y_{2}}(v))=\psi(y_{2},v)$$ since it holds by the definition of $\rho$ that $\rho_{y_{2}}(v)=\phi_{y_{2},y_{1}}(\rho_{y_{1}}(v))$. This proves that $\psi$ is a graph isomorphism. Hence, the connection $\phi$ is trivial.

Next, we assume that the connection $\phi$ is trivial, i.e., there exists a graph isomorphism $\varphi$ from $G\times_{\phi}F$ to $G\times F$ with $\pi \circ \varphi=\pi$. For any loop $\{x_{i}\}_{i=0}^N$ in $G$ and any given $v\in V_F$, $\{(x_{i},\prod_{l=0}^{i-1}\phi_{x_{l+1},x_{l}}(v))\}_{i=0}^N$ is a path in $G\times_{\phi}F$.
Since $\pi\circ\varphi=\pi$, we have for any $0\leq i\leq N$, \[\varphi((x_i, \prod_{l=0}^{i-1}\phi_{x_{l+1},x_{l}}(v)))=(x_i,y_i),\]
for some $y_i\in V_F$. Noticing that $\varphi$ is a graph isomorphism, we derive
 $$(x_j,y_j) \sim (x_{j+1},y_{j+1})\,\,\text{for}\,\,0 \leq j \leq N.$$
 Since $x_j\neq x_{j+1}$, we have $y_{j}=y_{j+1}$ for $0 \leq j \leq N $, especially $y_{0}=y_{N}$. Thus $\{(x_i,y_i)\}_{i=0}^N$ is a loop and so does its preimage  $\{(x_{i},\prod_{l=0}^{i-1}\phi_{x_{l+1},x_{l}}(v))\}_{i=1}^N$. We conclude that $v=\prod_{l=0}^{N-1}\phi_{x_{l+1},x_{l}}(v)$. Since $v$ is arbitrarily chosen, we obtain (\ref{eq:1}).
\end{proof}

Note that any connection on a tree is trivial. A loop in the base graph of a graph bundle is called \textbf{balanced} (resp. \textbf{unbalanced}) if the condition (\ref{eq:1}) does (resp. does not) hold.

\section{Graph bundles and vertex transitivity}

Recall that (connected) differentiable manifolds are homogeneous. That is, for any two given points $p,q$ in a connected differentiable manifold $M$, there exists a diffeomorphism from $M$ to itself mapping $p$ to $q$. Graphs with a similar property are called vertex transitive graphs, i.e., for any pair of vertices, there exists a graph isomorphism that maps one to the other. In this section, we prove the following theorem.

\begin{theorem}\label{thm:2}
Let $G$ and $F$ be two vertex-transitive graphs and $\phi$ be an associated non-trivial connection. Then the graph bundle $G\times_{\phi}F$ is not isomorphic (as graphs) to $G\times F$. Moreover, if $G\times_{\phi}F$ is a discrete vector bundle, then it cannot be vertex-transitive.
\end{theorem}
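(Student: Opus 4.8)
The plan is to prove the two assertions separately, both via a counting or orbit argument that exploits the rigidity forced by vertex-transitivity together with the structure of the projection $\pi$.

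\textbf{First assertion.} Suppose for contradiction that there is a graph isomorphism $\Psi: G\times_\phi F \to G\times F$ (no compatibility with $\pi$ assumed). The key structural observation is that in a graph bundle $G\times_\phi F$, the ``vertical'' copies $\{x\}\times V_F$ are induced subgraphs isomorphic to $F$, and any edge leaving such a copy goes to an adjacent fiber. I would first try to show that $\Psi$ must carry fibers to fibers, up to the product structure on the target: that is, one identifies the set of maximal ``$F$-like'' subgraphs through each vertex and argues they are permuted by $\Psi$. Concretely, since $G$ and $F$ are vertex-transitive, so is $G\times F$, and the local combinatorial data at a vertex of $G\times_\phi F$ equals that at a vertex of $G\times F$. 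To make the fiber-preserving claim rigorous I expect to need a hypothesis that distinguishes the $F$-directions from the $G$-directions at each vertex; absent extra girth assumptions one should use the global decomposition into factors. Once $\Psi$ is shown to respect the fibration structure (sending fibers to fibers and inducing a graph automorphism of $G$ on the base), it follows that $\Psi$ realizes an equivalence of bundles up to composing with a product automorphism, hence $\phi$ is equivalent to the trivial connection by the previous theorem applied loopwise — contradicting non-triviality. This fiber-recognition step is the main obstacle, and I anticipate the paper supplies the ``natural constraints'' precisely to make it work (e.g. assuming $F$ and $G$ have different local structure, or that the bundle is a discrete vector bundle).

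\textbf{Second assertion.} Now assume $G\times_\phi F$ is a discrete vector bundle with null element $o\in V_F$ and null section $i:x\mapsto(x,o)$, and suppose toward a contradiction that $G\times_\phi F$ is vertex-transitive. Consider the image $S:=i(V_G)=V_G\times\{o\}$. The plan is to show that $S$ is, in a sense detectable by the intrinsic graph structure, ``special'' — for instance, that the subgraph induced on $S$ is isomorphic to $G$ and that vertices of $S$ have a different neighborhood pattern than vertices off $S$, which would contradict vertex-transitivity. The cleanest route: since every $\phi_{yx}$ fixes $o$, the horizontal edges out of $(x,o)$ all land in $S$, so $S$ is an induced copy of $G$ and moreover $S$ is a connected component of the subgraph spanned by ``horizontal'' edges through its vertices in a way that other sections fail to be. Then I would count, for a vertex $(x,v)$ with $v\neq o$: is there an analogous invariant copy of $G$ through it? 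Because $\phi$ is non-trivial, there is an unbalanced loop $\gamma$ in $G$, and transporting $v$ around $\gamma$ via the $\phi$'s does not return to $v$; this means the ``horizontal leaf'' through $(x,v)$ is not a section over $G$ — it is a copy of some connected cover of $G$ or a larger graph — whereas through $(x,o)$ the horizontal leaf is exactly $G$. Hence the two vertices have non-isomorphic ``horizontal components,'' an isomorphism-invariant notion, so no automorphism can map one to the other, contradicting vertex-transitivity.

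\textbf{Remarks on execution.} For the second part the notion of ``horizontal leaf through a vertex'' needs to be made intrinsic: I would define it combinatorially as the subgraph reachable using only edges of the form (ii) in the bundle definition, but since that uses the bundle structure I must instead characterize these edges intrinsically — for instance as edges not lying in any induced subgraph isomorphic to the fiber $F$ in the appropriate local sense, which is where vertex-transitivity of $F$ and $G$ and the earlier ``no product structure'' result get reused. The honest expectation is that the first assertion does most of the work: once we know the bundle is not a product, the horizontal-leaf-through-$o$ being a genuine section over $G$ (a copy of $G$) while horizontal leaves through other points are strictly larger gives the orbit obstruction directly. The main obstacle throughout is making ``horizontal versus vertical'' an isomorphism-invariant dichotomy; I would handle it by combining the structure of $\pi$ (edges either stay in a fiber or project to an edge of $G$) with the transitivity hypotheses, and, if needed, invoke the explicit ``natural constraints'' the authors impose on $G$ and $F$.
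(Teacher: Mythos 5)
Both halves of your proposal hinge on steps that you yourself flag as ``the main obstacle'' and never carry out, and these are genuine gaps rather than routine details. For the first assertion you need an arbitrary graph isomorphism $\Psi: G\times_\phi F\to G\times F$ to carry fibers to fibers (up to the product structure of the target). Nothing in the hypotheses gives you this: the only assumptions are vertex-transitivity of $G$ and $F$, with no girth, degree, or local-structure separation between the $G$-directions and the $F$-directions at a vertex. Even granting that the target has a unique Cartesian prime factorization, the fibers of the bundle are not a priori layers of any Cartesian factorization of $G\times_\phi F$, so unique-factorization arguments do not apply to the source. The same difficulty sinks your second assertion: the ``horizontal leaf'' is defined via the bundle's edge types, and you concede you cannot characterize those intrinsically. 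The paper's own examples (a non-trivial $\mathbf{Z}_4$-bundle over $\mathbf{Z}_n$ that is vertex-transitive, and a non-trivial $K_2$-bundle that is an Abelian Cayley graph) show that ``horizontal versus vertical'' is genuinely not an isomorphism-invariant dichotomy in this setting.

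The paper's argument sidesteps fiber-recognition entirely by counting an isomorphism invariant: the number of loops of a fixed length based at a vertex. One picks $x_0$ minimizing the length $m(x_0)$ of the shortest unbalanced loop of $G$, and $v_0\in V_F$ moved by the holonomy of such a loop; after composing with an automorphism of the vertex-transitive product one may assume the hypothetical isomorphism sends $(x_0,v_0)$ to $(x_0,v_0)$. Loops of length $m(x_0)$ at $(x_0,v_0)$ whose base projection is shorter than $m(x_0)$ are equinumerous in the bundle and in the product (their base projections are balanced by minimality of $m(x_0)$, so both projections are loops, and the number of lifts given by Lemmas 1 and 2 depends only on the projection lengths), while loops projecting onto the full length $m(x_0)$ are strictly fewer in the bundle because the unbalanced loop admits no lift closing up at $v_0$. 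The identical count distinguishes $(x_0,v_0)$ from $(x_0,o)$ in the discrete vector bundle case, since every loop at $x_0$ lifts to a closed loop at the null element $o$. To rescue your outline you would have to replace the fiber-recognition step by some quantity preserved under arbitrary graph isomorphisms --- which is precisely the loop count the authors use.
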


\begin{remark}
    Generally a non-trivial graph bundle $G\times_{\phi}F$ might still be vertex transitive. See Section \ref{sec:4}, Example \ref{eg:dvb1} and \ref{sec:5}, Example \ref{eg:dvb2}.
\end{remark}

To prove Theorem \ref{thm:2}, we study projections of paths. We first explain the concept in the case of Cartesian products of graphs. 
\begin{definition}[\textbf{Projections of a path in a Cartesian product of graphs}]\label{def:projection_Product}
     For any given path $\gamma= \{(x_{i},v_{i})\}_{i=0}^N$ in $G\times F$, the projection $\pi_1(\gamma):=\{p_i\}$ of $\gamma$ to the graph $G$ is a path defined inductively as below: For any $i$, set $p_i=x_{k(i)}$, with $k(0)=0$, and \[k(i+1)=\min\{n\in \mathbb{N}: n\geq k(i), x_n\neq x_{k(i)}\}.\]
     In the same way we can define the projection $\pi_2(\gamma)$ of $\gamma$ to the graph $F$.
\end{definition}
 Notice that a loop in $G\times F$ of length N is projected to two shorter loops $\pi_1(\gamma)$ in $G$ and $\pi_2(\gamma)$ in $F$, respectively, with $|\pi_1(\gamma)|+|\pi_2(\gamma)|=N$. 
 
 We have the following lemma.
\begin{lemma}\label{Lemma 1}
For any loop $\gamma_{1}$ in $G$ starting at $x \in G$ with $|\gamma_{1}|=m$ and $\gamma_{2}$ in $F$ starting at $v \in F$ with $|\gamma_{2}|=n$, there are exactly $\tbinom{m+n}{n}=\frac{(m+n)!}{m!n!}$ loops starting at $(x,v)$ in $G \times F$ with $\gamma_{1}$ and $\gamma_{2}$ as their projections.
\end{lemma}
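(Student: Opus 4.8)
The plan is to set up a bijection between the loops in $G\times F$ with prescribed projections $\gamma_1$ and $\gamma_2$ and the set of ways to interleave the two loops, i.e.\ the lattice paths from $(0,0)$ to $(m,n)$ using unit steps East and North. Think of a loop $\gamma = \{(x_i,v_i)\}_{i=0}^{m+n}$ whose $G$-projection is $\gamma_1$ and whose $F$-projection is $\gamma_2$: by Definition~\ref{def:projection_Product}, each step of $\gamma$ either advances the first coordinate (a ``$G$-step'', where $v_i$ stays fixed and $x_i$ moves to the next vertex of $\gamma_1$) or advances the second coordinate (an ``$F$-step''), and these cannot happen simultaneously because in $G\times F$ adjacent vertices differ in exactly one coordinate. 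First I would make this precise: given $\gamma$, record the sequence $s\in\{E,N\}^{m+n}$ where $s_i=E$ if the $i$-th step of $\gamma$ moves the $G$-coordinate and $s_i=N$ if it moves the $F$-coordinate. Since $\pi_1(\gamma)=\gamma_1$ has length $m$, there are exactly $m$ indices with $s_i=E$; since $\pi_2(\gamma)=\gamma_2$ has length $n$, there are exactly $n$ with $s_i=N$. So the map $\gamma\mapsto s$ lands in the set of binary strings with $m$ copies of $E$ and $n$ copies of $N$, which has size $\binom{m+n}{n}$.

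Next I would check that this map is a bijection. For injectivity: the string $s$ together with $\gamma_1$, $\gamma_2$ and the basepoint $(x,v)$ reconstructs $\gamma$ step by step — at step $i$, if $s_i=E$ we know the new $G$-coordinate is the next vertex along $\gamma_1$ (counting how many $E$'s have occurred so far) and the $F$-coordinate is unchanged, and symmetrically if $s_i=N$; hence $\gamma$ is determined by $s$. For surjectivity: given any such string $s$, define $\gamma$ by exactly this rule, walking along $\gamma_1$ whenever $s_i=E$ and along $\gamma_2$ whenever $s_i=N$. Each consecutive pair $(x_i,v_i),(x_{i+1},v_{i+1})$ differs in one coordinate, and that coordinate-change is an edge of $G$ (resp.\ $F$) because $\gamma_1$ (resp.\ $\gamma_2$) is a path; so $\gamma$ is a genuine path in $G\times F$. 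It starts at $(x,v)$ by construction, and it ends at $(x_m^{(1)}, v_n^{(2)}) = (x,v)$ because $\gamma_1$ and $\gamma_2$ are loops, so after consuming all $m$ $E$-steps and all $n$ $N$-steps we return to the start in both coordinates; thus $\gamma$ is a loop. Finally its projections are exactly $\gamma_1$ and $\gamma_2$: deleting the repeated vertices as in Definition~\ref{def:projection_Product} precisely undoes the interleaving, returning the $E$-subsequence as $\gamma_1$ and the $N$-subsequence as $\gamma_2$.

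The only point that needs a little care — and the place where I expect the bookkeeping to be most delicate — is verifying that the projection operation of Definition~\ref{def:projection_Product}, applied to the interleaved path $\gamma$, genuinely recovers $\gamma_1$ rather than some collapsed version of it; this matters because a loop $\gamma_1$ is allowed to revisit vertices (the paragraph after the Paths definition stresses that $x_i=x_j$ is permitted for $|i-j|>1$), so one cannot argue by ``counting distinct vertices'' but must track the index function $k(i)$ in the definition of $\pi_1$ honestly. Concretely I would show that, with $\gamma$ built from $s$, the consecutive constant-runs of the $G$-coordinate are exactly the maximal blocks of $\gamma$ between successive $E$-steps, so that $\pi_1(\gamma)$ lists the vertices $x^{(1)}_0, x^{(1)}_1,\dots,x^{(1)}_m$ in order — i.e.\ equals $\gamma_1$ — and symmetrically for $\pi_2$. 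Once this is pinned down, the counting statement $\binom{m+n}{n}=\frac{(m+n)!}{m!n!}$ is immediate from the standard count of binary strings, and the lemma follows. (This lemma is the combinatorial engine behind Theorem~\ref{thm:2}: comparing loop-counts in $G\times F$ with loop-counts in $G\times_\phi F$ detects the non-triviality of $\phi$.)
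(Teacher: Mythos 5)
Your argument is correct: the interleaving bijection between such loops and binary strings with $m$ symbols for $G$-steps and $n$ symbols for $F$-steps, together with the check that collapsing constant runs really returns $\gamma_1$ and $\gamma_2$ (using that consecutive vertices of a path in a simple graph are distinct), is exactly the ``straightforward'' proof the paper chooses to omit. Nothing in your write-up deviates from or adds to what the authors evidently have in mind, so there is no gap and no genuinely different route to compare.
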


We omit the proof, which is straightforward. We will only need the fact that the number of loops in $G\times F$ with $\gamma_1$ and $\gamma_2$ as projections depends only on the lengths $|\gamma_1|$ and $|\gamma_2|$.

 For general graph bundles, the projection of a path towards the base graph can be defined similarly as in Definition \ref{def:projection_Product}, while the projection to the fiber graph need to be carefully addressed. 
 
To ease our notations, we denote
$
\phi_{xx}=id \in Aut(F)
$
for every $x\in G$.

\begin{definition}[\textbf{Projections of a path in a graph bundle}]
    For any given path $\gamma= \{(x_{i},v_{i})\}_{i=0}^{N}$ in $G\times_{\phi}F$, the projection $\pi_1(\gamma):=\{p_i\}$ of $\gamma$ to the graph $G$ is defined inductively as follows: For any $i$, set $p_i=x_{k(i)}$, with $k(0)=0$, and \[k(i+1)=\min\{n\in \mathbb{N}: n\geq k(i), x_n\neq x_{k(i)}\}.\]
    The projection $\pi_2(\gamma):=\{q_i\}$ of the path $\gamma$ to the fiber $F$ is defined as follows: First set $\{w_i\}_{i=0}^{N}$ a sequence in $F$ such that $w_{0}=v_{0}$ and 
    $$w_{i}=\left(\prod_{j=0}^{i-1}\phi_{x_{j+1},x_{j}}\right)^{-1}(v_{i}), i\geq 1.$$ 
    The projection $\pi_2(\gamma)=\{q_i\}$ is defined inductively as $q_i=w_{s(i)}$, with $s(0)=0$, and \[s(i+1)=\min\{n\in \mathbb{N}: n\geq s(i), w_n\neq w_{s(i)}\}.\]

\end{definition}

 In the graph bundle $G\times_{\phi} F$ a loop starting at $(x,v)$ of length $N$ is projected into a loop $\pi_1(\gamma)$ in $G$ and a path $\pi_2(\gamma)$ in $F$ connecting $v$ and $(\prod_{l=0}^{N-1}\phi_{x_{l+1},x_{l}})^{-1}(v) $ with $|\pi_1(\gamma)|+|\pi_2(\gamma)|=N$. Notice that in general the path $\pi_2(\gamma)$ might not be a loop. 
 
 The following analogue to Lemma \ref{Lemma 1} holds.

\begin{lemma} \label{lemma 2}
For any loop $\gamma_{1}=\{x_{i}\}$ in $G$ starting at $x \in G$ with $|\gamma_{1}|=m$ and any path $\gamma_{2}$ in $F$ connecting $v \in F$ and $(\prod_{l=0}^{m-1}\phi_{x_{l+1},x_{l}})^{-1}(v)$ with $|\gamma_{2}|=n$, there are exactly $\tbinom{m+n}{n}=\frac{(m+n)!}{m!n!}$ paths starting at $(x,v)$ in $G \times_{\phi} F$ with $\gamma_{1}$ and $\gamma_{2}$ being its projections.
\end{lemma}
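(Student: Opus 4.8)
The plan is to reduce the bundle statement to the product statement (Lemma \ref{Lemma 1}) by means of an explicit "untwisting" bijection. Recall that, given a path $\gamma=\{(x_i,v_i)\}_{i=0}^N$ in $G\times_\phi F$, the sequence $w_i:=\left(\prod_{j=0}^{i-1}\phi_{x_{j+1},x_j}\right)^{-1}(v_i)$ lives in $F$, and by the definition of adjacency in $G\times_\phi F$ the assignment $(x_i,v_i)\mapsto(x_i,w_i)$ turns $\gamma$ into a genuine path in the Cartesian product $G\times F$: indeed, when $x_i=x_{i+1}$ we have $v_i\sim v_{i+1}$ and the prefactor is unchanged, so $w_i\sim w_{i+1}$; when $x_i\sim x_{i+1}$ we have $v_{i+1}=\phi_{x_{i+1},x_i}(v_i)$, which forces $w_{i+1}=w_i$. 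So first I would set up this "untwisting" map $\Phi$ on the level of paths, with the inverse map being $(x_i,w_i)\mapsto\left(x_i,\left(\prod_{j=0}^{i-1}\phi_{x_{j+1},x_j}\right)(w_i)\right)$, and check that $\Phi$ is a bijection between paths in $G\times_\phi F$ starting at $(x,v)$ and paths in $G\times F$ starting at $(x,v)$.

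Next I would track what $\Phi$ does to projections. The base projection $\pi_1$ is literally defined through the first coordinates $x_i$, which $\Phi$ leaves untouched, so $\pi_1(\Phi(\gamma))=\pi_1(\gamma)$; in particular $\gamma$ is a loop over $\gamma_1$ in $G$ of length $m$ iff $\Phi(\gamma)$ is. The fiber projection of $\Phi(\gamma)$ in $G\times F$ is, by Definition \ref{def:projection_Product}, obtained by deleting consecutive repetitions from $\{w_i\}$ — but that is exactly how $\pi_2(\gamma)$ was defined for the bundle. Hence $\pi_2(\Phi(\gamma))=\pi_2(\gamma)$ as paths in $F$. Note here that a path $\gamma$ in $G\times_\phi F$ over the loop $\gamma_1$ of length $m$ starting and ending with the correct first coordinate necessarily has final $F$-coordinate $v_N$ with $w_N=\left(\prod_{l=0}^{m-1}\phi_{x_{l+1},x_l}\right)^{-1}(v_N)$, so $\Phi(\gamma)$ ends at $(x,w_N)$ and $\pi_2(\gamma)=\pi_2(\Phi(\gamma))$ is a path in $F$ from $v$ to $w_N$; thus $\gamma$ has $\gamma_2$ (a path from $v$ to $\left(\prod_{l=0}^{m-1}\phi_{x_{l+1},x_l}\right)^{-1}(v)$) as its fiber projection exactly when $\Phi(\gamma)$ does, which is the condition appearing in Lemma \ref{Lemma 1} applied with base loop $\gamma_1$ and fiber path $\gamma_2$ (of length $n$) from $v$ to $w_N$ — and a small bookkeeping check confirms $w_N=\left(\prod_{l=0}^{m-1}\phi_{x_{l+1},x_l}\right)^{-1}(v)$ is forced, so $\gamma_2$ is indeed a loop's worth of data in the product only after the same untwisting, matching Lemma \ref{Lemma 1} verbatim.

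Once $\Phi$ is shown to be a bijection that preserves both projections, the count is immediate: the number of paths in $G\times_\phi F$ from $(x,v)$ with projections $\gamma_1,\gamma_2$ equals the number of paths in $G\times F$ from $(x,v)$ with the same projections, which by Lemma \ref{Lemma 1} is $\binom{m+n}{n}$. The main obstacle I anticipate is purely one of careful bookkeeping rather than conceptual difficulty: one must make sure that $\Phi$ is well-defined and invertible \emph{as a map on paths} (not merely on vertices), i.e.\ that it sends edges to edges in both directions, and one must verify that the two notions of fiber projection — "delete repetitions from $w_i$" in the bundle versus "delete repetitions from the second coordinate" in the product — literally coincide under $\Phi$, including the edge case where $\gamma_2$ has length $0$. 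A secondary subtlety is confirming that the endpoint constraint on $\gamma_2$ in the bundle (a path to $\left(\prod\phi\right)^{-1}(v)$) is exactly transported by $\Phi$ to the endpoint constraint in Lemma \ref{Lemma 1}; this is where one uses that $\Phi$ fixes first coordinates and that the telescoping prefactor at step $N$ is precisely $\prod_{l=0}^{m-1}\phi_{x_{l+1},x_l}$.
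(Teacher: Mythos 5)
Your proposal is correct. Note that the paper itself omits the proof of Lemma \ref{lemma 2} entirely (``Again we omit the proof''), so there is no written argument to compare against; the remark following Lemma \ref{Lemma 1} suggests the authors have in mind the direct shuffle count, namely that a path over $\gamma_1,\gamma_2$ is determined by choosing which $n$ of the $m+n$ steps are fiber steps, since once the interleaving is fixed the bundle adjacency rules determine every vertex uniquely. Your untwisting bijection $\Phi:(x_i,v_i)\mapsto(x_i,w_i)$ is a clean alternative: the verification that it sends bundle edges to Cartesian-product edges and back (using $\phi_{xx}=id$ and the left-multiplication convention, so that a fiber step leaves the prefactor unchanged and a base step forces $w_{i+1}=w_i$) is exactly right, it manifestly preserves $\pi_1$, and the bundle's $\pi_2$ is by definition the product's $\pi_2$ applied to $\Phi(\gamma)$, so the two counting problems are literally identified. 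The one small caveat is that you cannot quite cite Lemma \ref{Lemma 1} ``verbatim'': after untwisting, $\Phi(\gamma)$ is a path in $G\times F$ from $(x,v)$ to $\bigl(x,(\prod_{l=0}^{m-1}\phi_{x_{l+1},x_l})^{-1}(v)\bigr)$, which is a loop only when $\gamma_1$ is balanced, whereas Lemma \ref{Lemma 1} is stated for a pair of loops. You need the evident path version of Lemma \ref{Lemma 1} (same interleaving proof, and indeed the paper only ever uses that the count depends on the lengths alone), so you should state and use that version explicitly rather than appeal to the loop statement; your sentence about ``a loop's worth of data in the product only after the same untwisting'' is garbled and should be replaced by this observation. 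Your approach has the added benefit of making transparent why Step 1 in the proof of Theorem \ref{thm:2} yields ``the same result'' from Lemmas \ref{Lemma 1} and \ref{lemma 2}.
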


Again we omit the proof. Next we prove Theorem \ref{thm:2}:

\begin{proof}
We prove the first part of Theorem \ref{thm:2} by contradiction. Suppose there exists an isomorphism between a non-trivial bundle $G \times_{\phi} F$ and the graph product $G \times F$. For every $s \in V_G$, denote by $m(s)$ the length of the shortest unbalanced loop(s) in $G$ based at $s$. Find $x_0 \in V_G$ such that 
\[m(x_0) \leq m(y),\ \text{for any}\ y \in V_G.\]
Clearly $m(x_{0}) \geq 3$. By definition of an unbalanced loop, there exists $v_0 \in V_F$ and a loop $\{x_{i}\}_{i=0}^{m(x_{0})} $ based at $x_{0} \in G$ such that 
$$\prod_{l=0}^{m(x_0)-1}\phi_{x_{l+1},x_{l}}(v_{0}) \neq v_{0}.$$ 
 By assumption, $G$ and $F$ thus $G \times F$ are all vertex transitive. Therefore, there exists an isomorphism between $G \times_{\phi} F$ and $G \times F$ which maps $(x_0,v_0) \in V_{G \times_{\phi} F}$ to $(x_0,v_0) \in V_{G \times F}$.

Let $\Gamma_{1}$ (resp. $\Gamma_{2}$) be the set of all loops in $G \times_{\phi} F$ (resp. $G \times F$) that are based at $(x_0,v_0)$ of length $m(x_{0})$. To get a contradiction we will show $$\#\Gamma_{1}<\#\Gamma_{2}.$$ This is done in the following two steps.

\noindent \textbf{STEP 1} First we prove 
$$\#\{\gamma:\gamma \in \Gamma_{1}, |\pi_1(\gamma)|< m(x_0)\} =\#\{\gamma:\gamma \in \Gamma_{2}, |\pi_1(\gamma)|< m(x_0)\},$$
where $\pi_1(\gamma)$ is the projection of $\gamma$ to the base graph $G$. 
 For any
$$\gamma= \{((x_{i},v_i)\}_{i=0}^{m(x_{0})}\in \{\gamma:\gamma \in \Gamma_{1}, |\pi_1(\gamma)|< m(x_0)\},$$
the projection  $\pi_1(\gamma)$ must be a balanced loop due to the choice of $m_{x_0}$.  This implies that $$\prod_{l=0}^{m(x_0)-1}\phi_{x_{l+1},x_{l}}(v_{0})= v_{0},$$
and hence the projection $\pi_2(\gamma)$ of $\gamma$ to the fiber $F$ is a loop. That is, every $\gamma \in \Gamma_1$ with $|\pi_1(\gamma)|< m(x_0)$ is projected to two loops in $G$ and $F$, respectively. 

Then, we calculate $\#\{\gamma:\gamma \in \Gamma_{1}, |\pi_1(\gamma)|< m(x_0)\}$ in the following way. 

\textit{Pick any $\gamma_1$ based at $x_0$ in $G$
with $|\gamma_1|<m(x_0)$
and any $\gamma_2$ based at $v_0$ in $F$ such that $|\pi_1(\gamma)|+|\pi_2(\gamma)|=m(x_0)$. Count by Lemma \ref{lemma 2} the number of loops with $\gamma_1$ and $\gamma_2$ as its projections to the base and fiber, respectively. This number depends only on the length $|\gamma_1|$ and $m(x_0)$. Then $\#\{\gamma:\gamma \in \Gamma_{1}, |\pi_1(\gamma)|< m(x_0)\}$ is derived by summing up all these numbers over every pair of such $\gamma_1$ and $\gamma_2$.}

Notice that $\#\{\gamma:\gamma \in \Gamma_{1}, |\pi_1(\gamma)|< m(x_0)\}$ can be calculated in the same way with the same result by employing Lemma \ref{Lemma 1}. This completes the proof of Step 1.

\noindent \textbf{STEP 2} Secondly we prove that $$\#\{\gamma:\gamma \in \Gamma_{1}, |\pi_1(\gamma)|= m(x_0)\} <\#\{\gamma:\gamma \in \Gamma_{2}, |\pi_1(\gamma)|= m(x_0)\}.$$
Let us denote $L:=\{\text{loops based at }\,x_0\,\,\text{in}\,\,G\,\,\text{of length}\,\,m(x_0)\}$.
Clearly the right hand side equals the cardinality of $L$. 
For the left hand side, we observe that the projection map  $$\pi: \{\gamma:\gamma \in \Gamma_{1}, |\pi_1(\gamma)|= m(x_0)\}\to L$$ is injective. However, by our assumption, there exists a loop $\{x_i\}_{i=0}^{m(x_0)}$ in $L$ such that
$$\prod_{l=0}^{m(x_0)-1}\phi_{x_{l+1},x_{l}}(v_{0})\neq v_{0}.$$ 
Hence, there exists no $\gamma\in \{\gamma:\gamma \in \Gamma_{1}, |\pi_1(\gamma)|= m(x_0)\}$ with $\pi(\gamma)=\{x_i\}_{i=0}^{m(x_0)}$. Indeed, if we have such a $\gamma$, then $\pi_2(\gamma)$ is a path connecting $(\prod_{l=0}^{m(x_0)-1}\phi_{x_{l+1},x_{l}})^{-1}(v_{0})$ and $ v_{0}$ which has a positive length. This yields that $$|\gamma|=|\pi_1(\gamma)|+|\pi_2(\gamma)|>|\pi_1(\gamma)|=m(x_0),$$ which is a contradiction. 

In conclusion, the map $\pi$ is injective but not surjective. Therefore, we have $$\#\{\gamma:\gamma \in \Gamma_{1}, |\pi_1(\gamma)|= m(x_0)\}<\# L=\#\{\gamma:\gamma \in \Gamma_{2}, |\pi_1(\gamma)|= m(x_0)\}.$$

Combining Step 1 and Step 2, we prove the first part of Theorem \ref{thm:2}. 

For the second part of Theorem \ref{thm:2}, we choose $(x_0,v_0)$ as in the proof of the first part of the theorem. Let $m(x_0)$ still be the same as above. By the same method, one show that the numbers of loops of length $m(x_0)$ based at $(x_0,v_0)$ and $(x_0,o)$, respectively, are different, where $o \in F$ is the null element. Since such number is invariant under the graph isomorphism, there is no isomorphism which maps $(x_0,v_0)$ to $(x_0,o)$. Hence, we conclude that $G\times_{\phi}F$ is not vertex transitive.
\end{proof}

\section{Examples of non-trivial graph bundles}\label{sec:4}

In this section, we construct several families of explicit graph bundles. Recall from graph theory the following definition.

\begin{definition}[\textbf{Cayley Graph}]
Let $G$ be a group and $\Gamma \subset G$ generate $G$. A Cayley graph $(G,\Gamma)$ is a graph with vextex set $G$ and for $g_1,g_2 \in G$, $g_{1}\sim g_{2}$ if $g_{1}g_{2}^{-1} \text{or} \ g_{2}g_{1}^{-1} \in \Gamma$.
\end{definition}

The condition that $\Gamma$ generates $G$ makes the Cayley graph defined above connected. Clearly all Cayley graphs are vertex transitive. 

To fix notations, we first give two examples of Cayley graphs.
\begin{eg}
Let $G=\mathbf{Z}/n\mathbf{Z}$ and $\Gamma=\{\pm 1\}$, $n \in \mathbf{Z}, n\geq 3$. For simplicity, we abuse the notation and denote the Cayley graph $(G, \Gamma)$ by $\mathbf{Z}_n$. The automorphism group of this graph is the dihedral group $D_n$.
\end{eg}

\begin{eg}
Let $G=\mathbf{Z}/n\mathbf{Z}$ and $\Gamma=G-\{0\}$.
Then the Cayley graph $(G, \Gamma)$ is the complete graph $K_n$. The automorphism group of this graph is the symmetric group $S_n$.
\end{eg}

From now on, we concentrate on graph 
bundles over $\mathbf{Z}_n$. We have the following observation which is direct to check.

\begin{proposition}\label{pro1}
    
    Let $\mathbf{Z}_n \times_\phi F$ be a a graph bundle over $\mathbf{Z}_n$. There is an graph automorphism $\rho$ of $\mathbf{Z}_n \times_\phi F$ as follows:

 $$\rho([i],v)=([i+1],\phi_{[i+1][i]}(v))
 $$
where $v\in V_F$.
\end{proposition}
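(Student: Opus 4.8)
The plan is to verify directly that the map $\rho$ defined in Proposition \ref{pro1} is a graph automorphism, i.e.\ a bijection on $V=V_{\mathbf{Z}_n}\times V_F$ that preserves adjacency in both directions. First I would check bijectivity: the map $([i],v)\mapsto([i+1],\phi_{[i+1][i]}(v))$ has an explicit inverse, namely $([j],w)\mapsto([j-1],\phi_{[j-1][j]}(w))$, since each $\phi_{[i+1][i]}$ lies in $Aut(F)$ and $\phi_{xy}=\phi_{yx}^{-1}$; composing the two maps and using this relation returns the identity, so $\rho$ is a bijection on the vertex set.

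Next I would check that $\rho$ preserves adjacency. By the definition of the graph bundle there are two types of edges. For a ``vertical'' edge, $([i],v)\sim([i],w)$ with $v\sim w$ in $F$: then $\rho([i],v)=([i+1],\phi_{[i+1][i]}(v))$ and $\rho([i],w)=([i+1],\phi_{[i+1][i]}(w))$; since $\phi_{[i+1][i]}$ is an automorphism of $F$ it sends $v\sim w$ to $\phi_{[i+1][i]}(v)\sim\phi_{[i+1][i]}(w)$, and the two images share the same base coordinate $[i+1]$, so they are adjacent by case (i) of the definition. For a ``horizontal'' edge, $([i],v)\sim([j],w)$ with $[i]\sim[j]$ in $\mathbf{Z}_n$ and $w=\phi_{[j][i]}(v)$: I need $\rho([i],v)=([i+1],\phi_{[i+1][i]}(v))$ adjacent to $\rho([j],w)=([j+1],\phi_{[j+1][j]}(w))$. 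Since $[i]\sim[j]$ in $\mathbf{Z}_n$ we also have $[i+1]\sim[j+1]$ (translation is an automorphism of $\mathbf{Z}_n$), so it remains to verify the connection condition $\phi_{[j+1][j]}(w)=\phi_{[j+1][i+1]}\bigl(\phi_{[i+1][i]}(v)\bigr)$. Substituting $w=\phi_{[j][i]}(v)$, this reduces to the identity $\phi_{[j+1][j]}\,\phi_{[j][i]}=\phi_{[j+1][i+1]}\,\phi_{[i+1][i]}$ in $Aut(F)$. Because $\rho$ is a bijection whose inverse is of the same form, preservation of adjacency in both directions follows once preservation in one direction is established.

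The one genuine point to pin down — and the step I expect to be the main obstacle, modest as it is — is the relation $\phi_{[j+1][j]}\,\phi_{[j][i]}=\phi_{[j+1][i+1]}\,\phi_{[i+1][i]}$, which must be checked for the two relevant cases $[j]=[i+1]$ and $[j]=[i-1]$ (the only neighbors of $[i]$ in $\mathbf{Z}_n$). When $[j]=[i+1]$ the left side is $\phi_{[i+2][i+1]}\phi_{[i+1][i]}$ and the right side is $\phi_{[i+2][i+2]}\phi_{[i+1][i]}=\phi_{[i+1][i]}$ composed with... — here one uses $\phi_{xx}=id$ — wait, more carefully: with $[j]=[i+1]$ we get $[j+1]=[i+2]$, $[i+1]=[i+1]$, so the right side reads $\phi_{[i+2][i+1]}\phi_{[i+1][i]}$, matching the left side verbatim; the case $[j]=[i-1]$ is symmetric, giving on the right $\phi_{[i][i]}\phi_{[i][i-1]}=\phi_{[i][i-1]}$ against the left side $\phi_{[i][i-1]}\phi_{[i-1][i]}\phi_{?}$ — so one must instead directly expand using $w=\phi_{[j][i]}(v)$ and $\phi_{xy}=\phi_{yx}^{-1}$, and observe that both sides equal $\phi_{[i][i-1]}$ after cancellation. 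In short, the identity is a formal consequence of $\phi$ being $Aut(F)$-valued together with the relation $\phi_{xy}=\phi_{yx}^{-1}$ and the convention $\phi_{xx}=id$, and the claim "which is direct to check" in the statement is justified by carrying out exactly this short bookkeeping.
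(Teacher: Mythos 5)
The paper offers no proof of this proposition (it is stated as an observation ``direct to check''), and your direct verification---bijectivity via the explicit inverse $([j],w)\mapsto([j-1],\phi_{[j-1][j]}(w))$ and adjacency preservation via the identity $\phi_{[j+1][j]}\,\phi_{[j][i]}=\phi_{[j+1][i+1]}\,\phi_{[i+1][i]}$ checked for the two neighbors $[j]=[i\pm 1]$---is exactly the intended argument and is correct. One small slip in your final paragraph: in the case $[j]=[i-1]$ both composites equal the identity of $Aut(F)$ (the left side is $\phi_{[i][i-1]}\phi_{[i-1][i]}=id$ and the right side is $\phi_{[i][i+1]}\phi_{[i+1][i]}=id$, each sending $v$ back to $v$), not $\phi_{[i][i-1]}$; with that correction the bookkeeping closes cleanly.
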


We will use the group action of the cyclic group $\{\rho^{k}, k\in \mathbf{Z}\}$ generated by $\rho$ to explore the properties of the graph bundle $\mathbf{Z}_n \times_\phi F$. Next, we provide an example of vertex transitive non-trivial graph bundle.


\begin{eg}[\textbf{A non-trivial graph bundle can be vertex transitive}\label{eg:dvb1}]
Let $\mathbf{Z}_n$ be the base graph and $\mathbf{Z}_4$ be the fiber. Define the connection map $\phi$ as
\begin{equation}
\phi_{[i+1][i]}=\begin{cases}
([0][1])([2][3]), &\text{if $[i]=[0]$,}\\
\ id, &\text{otherwise.}
\end{cases}
\end{equation}
where $([0][1])([2][3])$ is the permutation interchanging $[0],[1]$ and also $[2],[3]$. The graph bundle $\mathbf{Z}_n \times_\phi \mathbf{Z}_4$ is non-trivial since there is an unbalanced loop. However, this graph is vertex transitive. In fact we have the following graph automorphism $\tau$:

\begin{equation}
\tau(([i],[j]))=\begin{cases}
([i],[3]) &\text{if $[j]=[0]$,}\\
([i],[2]) &\text{if $[j]=[1]$,}\\
([i],[1]) &\text{if $[j]=[2]$,}\\
([i],[0]) &\text{if $[j]=[3]$,}
\end{cases}
\end{equation}
where $[i]\in \mathbf{Z}/n\mathbf{Z}$ and $[j]\in \mathbf{Z}/4\mathbf{Z}$. It is straightforward to check that the action of the group generated by $\rho$ (defined in Proposition \ref{pro1}) and $\tau$ is transitive. Thus the graph bundle is vertex transitive.
\end{eg}

\begin{remark}
The Cayley graph
$\mathbf{Z}_n$ can be embedded into $S^{1}$  for any $n \geq 2$. Thus both the base and fiber graph of Example \ref{eg:dvb1} can be embedded into $S^{1}$. Moreover, the graph bundle defined above can be embedded into the Klein bottle, which is a non-trivial $S^{1}$ bundle over $S^{1}$.
\end{remark}

Below we present an example of a discrete vector bundle.
\begin{eg}[\textbf{A $K_{m}$ bundle over $\mathbf{Z}_n$}\label{eg2}]

Let $\mathbf{Z}_n$ be the base graph and $K_m$ be the fiber, where $n,m\geq 3$. Define the connection map $\phi$ as
\begin{equation}
\phi_{[i+1][i]}=\begin{cases}
([1][2]...[m-1]), &\text{if $[i]=[0]$,}\\
\ id, &\text{else,}
\end{cases}
\end{equation}
where $([1][2]...[m-1])$ is a cyclic permutation. Note that $\mathbf{Z}_n \times_\phi K_m$ is non-trivial. In fact, it is a non-trivial discrete vector bundle since the connection map fixes $[0] \in K_m$. From Theorem \ref{thm:2}, this graph bundle is not vertex transitive. However, this graph still has a strong symmetry. The group action of $Aut(\mathbf{Z}_n \times_\phi K_m)$ only have two orbits. To see this, one only needs to check the action of the group generated by $\rho$ defined in Proposition\ref{pro1} has two orbits. One of the orbits is $\{([i],[0]), [i]\in \mathbf{Z}_n, [0] \in K_m\}$. Below is a picture of the graph bundle with $m=4,n=4$.
\end{eg}

\begin{figure}[htbp]
  \centering
  \includegraphics[width=0.7\textwidth]{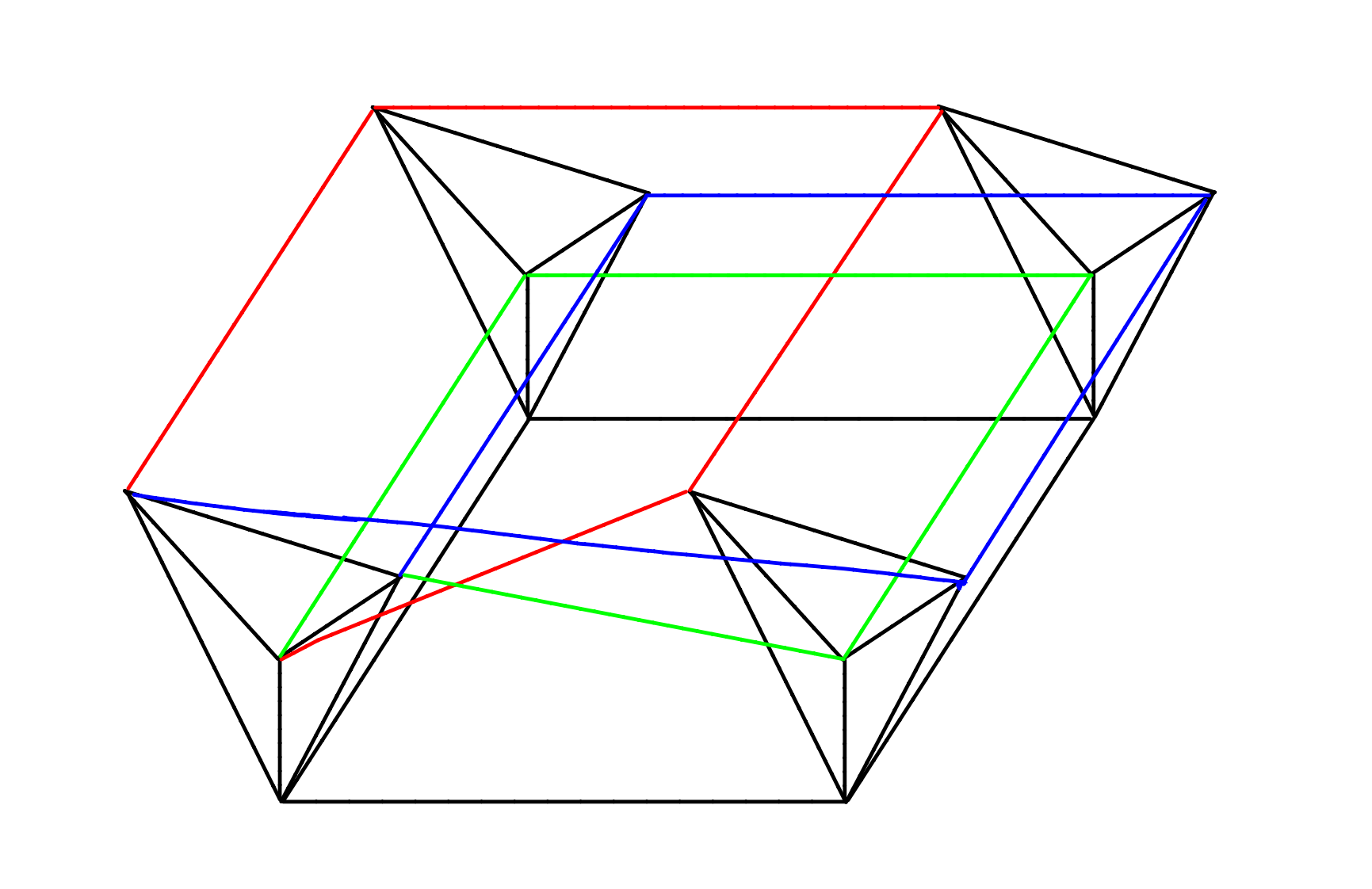}
  \caption{Graph Bundle $\mathbf{Z}_4\times_\phi K_4$ }
\end{figure}

In fact, we can use the same idea as in Example \ref{eg2} to construct a graph bundle with arbitrary given number of orbits under the group action of its automorphic group (as a graph) .

\begin{eg}[\textbf{ $K_{\frac{1}{2}i(i+1)}$ bundles over $\mathbf{Z}_n$ with $i$ orbits}\label{eg3}]

Let $\mathbf{Z}_n$ be the base graph and $K_{m(i)}$ be the fiber, where $n\geq 5$ and $m(i)=\frac{1}{2}i(i+1)$. Define the connection map $\phi$ as
\begin{equation}
\phi_{[i+1][i]}=\begin{cases}([1][2])([3][4][5])([6][7][8][9])...),&\text{if $[i]=[0]$,}\\
\ id, &\text{else,}
\end{cases}
\end{equation}
where $([1][2])([3][4][5])([6][7][8][9])...)$ is the product of cyclic permutations. The graph bundle $\mathbf{Z}_n \times_\phi K_{m(i)}$ is clearly non-trivial. Arguing as in Example \ref{eg2}, there are $i$ orbits under the action of $\{\rho^{k}, k\in \mathbf{Z}\}$, thus at most $i$ orbits under the action of $Aut(\mathbf{Z}_n \times_\phi K_{m(i)})$.

In fact, there are exactly $i$ orbits under the action of $Aut(\mathbf{Z}_n \times_\phi K_{m(i)})$. To see this, we view any loop of length $l$ in a graph $G$ as a graph homomorphism $\beta$ from $\mathbf{Z}_l$ to $G$. We call a loop geodesic-like if, for any $i$, the path \[\{\beta([i]),\beta([i+1]),\beta([i+2])\}\] is the only shortest path from $\beta([i])$ to $\beta([i+2])$. Clearly any automorphism of $G$ maps a geodesic-like loop to another.

From our construction of $\mathbf{Z}_n \times_\phi K_{m(i)}$ it is straightforward to check that for every geodesic-like loop $\{\beta([i])\}$, we have $\beta([i])=\rho (\beta([i\pm 1]))$. So every orbit of $\{\rho^{k}, k \in \mathbf{Z}\}$ can be viewed as an unbased geodesic loop. Since the number of points in each orbit vary there is no automorphism mapping a point from one orbit to another. Hence the number of orbits under the action of $Aut(\mathbf{Z}_n\times_{\phi}K_{m(i)})$ equals $i$.
\end{eg}

\section{Locally Abelian graphs}\label{sec:5}

Now we turn to the topic of Ricci-flatness. Let $G=(V,E)$ be a regular simple graph of degree $d$. Recall G is Ricci flat if for any $x \in V$, there exist maps $\eta_{i}:B_{1}(x)\rightarrow V$, $1\leq i\leq d$, such that\\
(i) $\eta_{i}(u) \sim u$, $u \in B_{1}(x)$,\\
(ii) $\eta_{i}(u)=\eta_{j}(u)$ iff $i=j$,\\
(iii) $\bigcup_{j}\{\eta_{i}(\eta_{j}(x))\}=\bigcup_{i}\{\eta_{j}(\eta_{j}(x))\}$.\\
These $\eta_{i}$ are called local frames of $x$. We call it S-Ricci flat or locally Abelian if, furthermore, \\
(iv)$\eta_{i}(\eta_{j}(x))=\eta_{i}(\eta_{j}(x))$.\\

\noindent A large family of locally Abelian graphs could be constructed via graph bundles. In fact, we have the following result.
\begin{theorem}\label{thm:3}
For any locally Abelian graphs $G$ and $F$ with an associated connection $\phi$ that is balanced on any loop of length 4, the graph bundle $G\times_{\phi}F$ is a locally Abelian graph.
\end{theorem}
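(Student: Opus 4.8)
The plan is to construct, for each vertex $(x,v)\in V_{G\times_\phi F}$, a full set of local frames by combining the frames of $G$ at $x$ with the frames of $F$ at $v$, and then to verify the four axioms (i)--(iv) one by one. Write $d_G$ and $d_F$ for the degrees of $G$ and $F$, so $G\times_\phi F$ is $(d_G+d_F)$-regular. Fix local frames $\{\eta^G_a\}_{a=1}^{d_G}$ of $G$ at $x$ and $\{\eta^F_b\}_{b=1}^{d_F}$ of $F$ at $v$. I would define the candidate frames at $(x,v)$ in two families: ``horizontal'' frames indexed by $a$, which on a vertex $(y,w)\in B_1((x,v))$ move the first coordinate via $\eta^G_a$ and correct the second coordinate by the appropriate connection automorphism (so that the image stays in the graph bundle), i.e. roughly $(y,w)\mapsto(\eta^G_a(y),\phi_{\eta^G_a(y),y}(w))$; and ``vertical'' frames indexed by $b$, which fix the first coordinate and act on the second by $\eta^F_b$, i.e. $(y,w)\mapsto (y,\eta^F_b(w))$ when $y=x$ and the natural transported version when $y\sim x$. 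Some care is needed in specifying the vertical frames on the whole ball $B_1((x,v))$, since $B_1((x,v))$ contains both the fiber-neighbors $(x,w)$ with $w\sim v$ and the base-neighbors $(y,\phi_{yx}(v))$ with $y\sim x$; on a base-neighbor $(y,\phi_{yx}(v))$ a vertical frame should apply $\phi_{yx}\circ\eta^F_b\circ\phi_{yx}^{-1}$ in the fiber, which is legitimate because conjugating a frame of $F$ at $v$ by an automorphism $\phi_{yx}$ gives a frame of $F$ at $\phi_{yx}(v)$.

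Next I would check axioms (i) and (ii). Axiom (i) (each frame maps a vertex to a neighbor) follows directly from the definition of adjacency in the graph bundle together with axiom (i) for $\eta^G_a$ and $\eta^F_b$: a horizontal frame produces an edge of type (ii) in the bundle, a vertical frame an edge of type (i). Axiom (ii) (distinctness) splits into three cases: two horizontal frames are distinct because their base components $\eta^G_a$ differ, two vertical frames are distinct because their fiber components differ, and a horizontal frame and a vertical frame have images differing in the first coordinate (horizontal moves it, vertical fixes it) on any vertex where $\eta^G_a$ is not the identity --- here one uses that $x\not\sim x$, so $\eta^G_a(x)\neq x$. This is routine.

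The substance is in axioms (iii) and (iv), which govern the second-order behavior $\eta_i(\eta_j((x,v)))$. I would organize the computation by the type of $i$ and $j$ (horizontal/horizontal, horizontal/vertical, vertical/horizontal, vertical/vertical) and show that in each case the composite equals what the corresponding composite of $G$-frames and $F$-frames dictates, possibly twisted by a product of connection automorphisms along a length-$2$ base-path. The hypothesis that $\phi$ is balanced on every loop of length $4$ is exactly what is needed here: the two ways of going ``base-then-fiber'' versus ``fiber-then-base'' around a $4$-cycle $x,\eta^G_a(x),?,x$ in the base (or a degenerate version of it) differ by a product $\prod\phi$ over a closed loop of length $\le 4$, which by hypothesis is the identity, so the fiber corrections commute past each other. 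With that commutation in hand, axiom (iv) for the bundle reduces to axiom (iv) for $\eta^G$ on the base and for $\eta^F$ on the fiber (the mixed horizontal-vertical composites commute automatically once the connection factors cancel), and axiom (iii) reduces to axiom (iii) for $\eta^G$ and $\eta^F$ separately together with the bijection between $\{(a,b)\}$-pairs and the second-order frame-compositions. I expect the main obstacle to be bookkeeping: writing the four second-order composites carefully enough that the cancellation of connection automorphisms is visible, and checking that the relevant loop in the base along which one invokes $4$-balancedness really has length at most $4$ (it may be a $3$-cycle or have repeated vertices, and one must confirm the balancedness hypothesis, or its trivial consequences for shorter loops, still applies). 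Once the connection factors are shown to cancel, (iii) and (iv) for the bundle are immediate consequences of (iii) and (iv) for $G$ and $F$.
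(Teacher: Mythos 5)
Your construction is exactly the one in the paper: horizontal frames $(s,w)\mapsto(a_i(s),\phi_{a_i(s),s}(w))$ and vertical frames acting fiberwise after transport by the connection, with the $4$-cycle balancedness invoked precisely where you put it, namely to make the two horizontal composites agree via $\phi_{a_j a_i(s),a_i(s)}\phi_{a_i(s),s}=\phi_{a_i a_j(s),a_j(s)}\phi_{a_j(s),s}$. The proposal is correct and follows essentially the same route as the paper's proof, including the caveat about degenerate length-$4$ loops, which the paper's definition of loops (allowing repeated vertices) already accommodates.
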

\begin{proof}
Fix $(x,v)$ in $G\times_{\phi}F$. Suppose $S_{1}=\{a_{i}\}$ and $S_{2}=\{b_{j}\}$ are local frames of $x$ and $v$ in $G$ and $F$, respectively. We define a local frame of $(x,v)$ in the following way: For every $(s,w) \in B_{1}(x,v)$, define for each $i$ that $$\eta_{a_{i}}(s,w)=(a_{i}(s),\phi(a_{i}(s),s)(w)).$$
For each $j$, the maps $\eta_{b_j}$ are defined by $$\eta_{b_{j}}(x,v)=(x,b_{j}(v)),$$ $$\eta_{b_{j}}(x,b_{k}(v))=(x,b_{k}\circ b_{j}(v)),$$ and $$\eta_{b_{j}}(a_{k}(x),\phi(a_{k}(x),x)(v))=(a_{k}(x), \phi(a_{k}(x),x)(b_{j}(v))).$$ It is direct to check that these local frames satisfy all of (i) to (iv), which implies $G\times_{\phi}F$ is locally Abelian.
\end{proof}

\begin{remark}
    In the previous version, we accidentally omitted the assumption that any loops in the base graph of length 4 is balanced with respect to the connection \(\phi\) in the statement of Theorem \ref{thm:3} and \ref{thm:4}. This assumption was already used unexplicitly in the proof since for 
$$\eta_{a_j}\circ\eta_{a_i}(s,w)=\eta_{a_i}\circ\eta_{a_j}(s,w)$$
to hold for any $(s,w)$, we need
$$\phi(a_j\circ a_i(s),a_i(s))\phi(a_i(s),s)=\phi(a_i\circ a_j(s),a_j(s))\phi(a_j(s),s),$$
which is guaranteed by the assumption.
\end{remark}

We also want to add one more example to clarify Theorem \ref{thm:3} on how to construct an S-Ricci flat graph by graph bundles. The following example is a graph bundle that is not a "locally Catersian product" 
of the base and fiber graph. However, we would like to mention that it is in fact a Cayley graph of an Abelian group which again shows that the discrete vector bundle condition of Theorem \ref{thm:2} is necessary.

\begin{eg}\label{eg:dvb2}

    Let the base graph $G$ be the Cayley graph $$\{\mathbb{Z}^2,\{\pm (1,0), \pm (0,1), \pm (1,1)\}\},$$
    and the fiber graph $F$ be $K_2$, viewed as a Cayley graph of $\mathbb{Z}/2\mathbb{Z}$. Set the connection map $\phi$ on directed edges generated by $\pm (1,1)$ of the base graph as the involution of $K_2$ and the identity map otherwise. The connection is not balanced on $3$-cycles (thus non-trivial) and balanced on all loops of length 4 of the base graph. The frames $\{a_i\}_{i=1}^{6}$ on $G$
    are given naturally such that for all $(m,n)\in \mathbb{Z}^2$,
    $$a_1(m,n)=(m+1,n),\,\,\,\,\,a_2(m,n)=(m-1,n),$$
    $$a_3(m,n)=(m,n+1),\,\,\,\,\,a_4(m,n)=(m,n-1),$$
    $$a_5(m,n)=(m+1,n+1),\,\,\,\,\,a_6(m,n)=(m-1,n-1).$$
 The frame $\{b\}$ on $K_2$ is given such that for all $q \in \mathbb{Z}$,
 $$b\,(q\,\mathrm{mod}\, 2)=q+1\,\mathrm{mod}\, 2.$$
 As in proof of Theorem \ref{thm:3}, for any vertex $(m,n, q\, \mathrm{mod} \,2)$ of $G\times_\phi F$, $m,n,q\in \mathbb{Z}$, by defining frames as
$$\eta_{a_1}(m,n,q\,\mathrm{mod}\, 2)=(m+1,n,q\,\mathrm{mod}\, 2),\,\,\,\,\, \eta_{a_2}(m,n,q\,\mathrm{mod}\, 2)=(m-1,n,q\,\mathrm{mod}\, 2),$$
$$\eta_{a_3}(m,n,q\,\mathrm{mod}\, 2)=(m,n+1,q\,\mathrm{mod}\, 2),\,\,\,\,\, \eta_{a_4}(m,n,q\,\mathrm{mod}\, 2)=(m,n-1,q\,\mathrm{mod}\, 2), $$
$$\eta_{a_5}(m,n,q\,\mathrm{mod}\, 2)=(m+1,n+1,q+1\,\mathrm{mod}\, 2),$$$$\eta_{a_6}(m,n,q\,\mathrm{mod}\, 2)=(m-1,n-1,q-1\,\mathrm{mod}\, 2), $$
$$\eta_{b}(m,n,q\,\mathrm{mod}\, 2)=(m,n,q+1\,\mathrm{mod}\, 2),$$
we see that it is S-Ricci flat. In fact, it is isometric to the Cayley graph $$\{\mathbb{Z}^2\times \mathbb{Z}/2\mathbb{Z},\{\pm(1,0,0\,\mathrm{mod}\, 2),\pm(0,1,0\,\mathrm{mod}\, 2),\pm(1,1,1\,\mathrm{mod}\, 2),(0,0,1\,\mathrm{mod}\, 2)\}\}$$
and each frame $\eta$ of the graph bundle corresponds to a generator of this Cayley graph.
\end{eg}

All Abelian Cayley graphs are locally Abelian. It turns out that the set of Abelian Cayley graphs is a small subset of locally Abelian graphs.
\begin{theorem}\label{thm:4}
For any nontrivial discrete vector bundle $G\times_{\phi}F$ with $G$ and $F$ being Abelian Cayley graphs and $\phi$ being balanced on any loop of length 4, $G\times_{\phi}F$ is locally Abelian rather than an Abelian Cayley graph.
\end{theorem}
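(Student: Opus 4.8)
The plan is to combine Theorem \ref{thm:3} with Theorem \ref{thm:2} in the following way. First, since $G$ and $F$ are Abelian Cayley graphs, they are in particular locally Abelian (the constant frames given by left multiplication by generators satisfy (i)--(iv)), and they are vertex-transitive. By hypothesis $\phi$ is balanced on every loop of length $4$, so Theorem \ref{thm:3} applies and tells us that $G\times_{\phi}F$ is locally Abelian. Thus the whole content of the theorem reduces to showing that $G\times_{\phi}F$ is \emph{not} an Abelian Cayley graph. The key observation is that every Abelian Cayley graph is vertex-transitive, so it suffices to prove $G\times_{\phi}F$ is not vertex-transitive; and this is exactly the second part of Theorem \ref{thm:2}, since $G\times_{\phi}F$ is a \emph{nontrivial discrete vector bundle} with vertex-transitive base and fiber.

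So the proof is essentially a two-line deduction: apply Theorem \ref{thm:3} to get local Abelianness, apply the second half of Theorem \ref{thm:2} to get failure of vertex-transitivity, and invoke the elementary fact that Abelian Cayley graphs are vertex-transitive to conclude that $G\times_{\phi}F$ cannot be an Abelian Cayley graph. I would state these three inputs explicitly and then assemble them. The one genuinely new thing worth spelling out — though it is standard — is why an Abelian Cayley graph is vertex-transitive: for a Cayley graph $(H,\Gamma)$ of any group $H$, left translations $x\mapsto hx$ are graph automorphisms acting transitively on $H$; this uses no commutativity at all, so in fact \emph{every} Cayley graph is vertex-transitive, and Abelian-ness is not even needed for this half. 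One should take care that the hypotheses of Theorem \ref{thm:2} are met: $G$ and $F$ vertex-transitive (true, being Cayley graphs) and $\phi$ nontrivial (given), and that $G\times_\phi F$ is a discrete vector bundle (given). All boxes are checked.

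I do not expect a real obstacle here; the theorem is a corollary of the machinery already built. The only point requiring a sentence of care is making sure the $4$-cycle-balanced hypothesis in Theorem \ref{thm:3} is visibly the same hypothesis assumed in the present statement (it is, verbatim), so that the application is clean. If one wanted to be thorough one could also remark that the conclusion is sharp: Example \ref{eg:dvb2} shows that without the discrete-vector-bundle assumption the bundle can still be an Abelian Cayley graph, so that hypothesis in Theorem \ref{thm:2} (and hence here) genuinely cannot be dropped. That remark is optional and not part of the proof proper. In short: the structure is Theorem \ref{thm:3} $\Rightarrow$ locally Abelian; Theorem \ref{thm:2} (part two) $\Rightarrow$ not vertex-transitive; Abelian Cayley $\Rightarrow$ vertex-transitive; combine.
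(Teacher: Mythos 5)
Your proposal is correct and follows essentially the same route as the paper's own proof: Theorem \ref{thm:3} gives local Abelianness, the second part of Theorem \ref{thm:2} gives failure of vertex-transitivity, and since every Cayley graph is vertex-transitive the bundle cannot be an Abelian Cayley graph. Your explicit verification of the hypotheses and the remark on why Cayley graphs are vertex-transitive are just slightly more detailed than the paper's two-line argument.
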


\begin{proof}
The graph bundle $G\times_{\phi}F$ is locally Abelian due to Theorem 3. It is not a Cayley graph since it is not vertex transitive by Theorem \ref{thm:2}.
\end{proof}

\section*{Acknowledgement}
We thank Xiangru Zeng for bringing our attention to the pull-back of the tangent bundle $TS^2$ under the projection map $\pi: S^{2} \times \mathbf{R} \to  S^{2}$. WL wants to thank Jingbin Cai for discussions. We are very grateful to the anonymous referee for comments and suggestions which have contributed  a lot to improve the quality of our article. We also thank Florentin Münch for pointing our error in the previous version out and also Norbert Peyerimhoff, Supanat Kamtue and Joe Thomas for further discussions.
This work is supported by the National Key R and D Program of China 2020YFA0713100, the National Natural Science Foundation of China (No. 12031017), and Innovation Program for Quantum Science and Technology 2021ZD0302902.

\bibliographystyle{plainnat}

\end{document}